\numberwithin{equation}{section}
\numberwithin{equation}{section}
\newtheorem{theorem}{Theorem}[section]
\newtheorem{lemma}[theorem]{Lemma}
\theoremstyle{definition}
\newtheorem{definition}[theorem]{Definition}
\newtheorem{example}[theorem]{Example}
\theoremstyle{remark}
\numberwithin{equation}{section}
\begin{document} 
 
 \begin{abstract} 
 The modular group $\Gamma$ (which is the Hecke group $H_3$) can be used to study triangular maps,  Here we use the Hecke group $H_4$ to study the regular map that underlies Bring's surface of genus 4. Our main result is the determination of the 20-sided hyperbnolic polygon that gives Bring's surface.
 \end{abstract}

\title{Hecke--Farey coordinates for Bring's surface}

\author{Doha Kattan$^1$}
 \address{$^1$Department of Mathematics, Faculty of Science and Arts, King Abdulaziz University, Rabigh, Saudi Arabia}
\author{David Singerman$^2$}
 \address{$^2$School of Mathematical Sciences, University of Southampton, UK}
 \email{dakattan@kau.edu.sa, d.singerman@soton.ac.uk}

\maketitle

\section{Introduction} The Farey map $\mathbb{F}$, defined in section 2, is the universal triangular map, in that every triangular map on an orientable surface is a quotient of $\mathbb{F}$ by a subgroup of the modular  group, $\Gamma$ and every regular triangular map is a quotient of $\mathbb{F}$ by a normal subgroup of $\Gamma.$  This gives rise to many important Riemann surfaces such as the Klein surface.  In \cite{ISS} we used the Farey map to study the Klein surface.  In particular we showed how the Farey structure could be used to construct the famous 14-sided fundamental polygon that represents the Klein surface. (Other Riemann surfaces coming out of triangular maps are described in \cite{SS16}). In this paper we describe an important Riemann  surface that comes from a regular 4-gonal map,  namely Bring's surface.  We now use the universal 4-gonal map and the modular group is replaced by the Hecke group $H_4$.  Our main  result is to use Hecke-Farey coordinates to describe the 20-sided polygon that represents Bring's surface and describe the side pairings
needed to construct the surface.  This polygon was also given by \cite{RR92} and discussed by \cite{BN12} but here we provide detailed proofs that this polygon and side pairings does indeed give Bring's surface.

 \section{Bring's surface} We start with the following result which comes from Conder and Dobcsanyi's classifications of regular maps of small genus~\cite{CD01}.
 
  \begin{theorem} \label{th1}There is a unique regular map of type $\{5,4\}$ of genus 4 with 120 automorphisms.  The automorphism group of this map here is the symmetric group $S_5$.
    \end{theorem}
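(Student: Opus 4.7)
The plan is to translate the question into one about smooth quotients of the ordinary $(2,4,5)$ triangle group
\[
\Delta^{+} = \Delta^{+}(2,4,5) = \langle\, X, Y, Z \mid X^{2} = Y^{4} = Z^{5} = XYZ = 1 \,\rangle,
\]
since a regular map of type $\{5,4\}$ with orientation-preserving automorphism group $G$ corresponds to a surjection $\Delta^{+} \twoheadrightarrow G$ that preserves the orders of $X$, $Y$ and $Z$, and the genus is then controlled by $|G|$ through Euler's formula.

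First I would settle the combinatorics. If a regular $\{5,4\}$-map on a surface of genus $4$ has $V$ vertices, $E$ edges and $F$ faces then $4V = 2E = 5F$ and $V - E + F = -6$. These equations force $V = 30$, $E = 60$, $F = 24$, and since $|G| = 2E$ for an orientably regular map we obtain $|G| = 120$, matching the stated order.

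To establish existence I would exhibit a $(2,4,5)$-generating triple in $S_{5}$: take $X = (1\,2)$ and $Y = (2\,3\,4\,5)$, so that $XY = (1\,2\,3\,4\,5)$ has order $5$ and $Z := (XY)^{-1}$ satisfies $XYZ = 1$. Since $X$ and $XY$ already generate $S_{5}$, this yields a surjection $\Delta^{+} \twoheadrightarrow S_{5}$, and the corresponding regular map has the required genus by the calculation above.

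The main obstacle is uniqueness, and I would tackle it by running through the groups of order $120$. The non-solvable candidates reduce to $S_{5}$, $A_{5} \times C_{2}$ and $SL_{2}(5)$; the second is excluded because it has no element of order $4$, and the third because its unique involution is the central element $-I$, which forces $Y^{2} = X$ and collapses $\langle X, Y, Z\rangle$ into $\langle Y\rangle$. For the solvable groups of order $120$ one rules out any $(2,4,5)$-triple with $XYZ = 1$ by Sylow and commutator arguments, typically using the structure of the (necessarily normal) Sylow $5$-subgroup. Once $S_{5}$ is singled out, one verifies that its $(2,4,5)$-generating triples form a single $\mathrm{Aut}(S_{5})$-orbit, so the resulting regular map is unique up to isomorphism. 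As a shortcut one may instead simply quote the Conder--Dobcs\'anyi census \cite{CD01}, which lists this as the unique regular map of type $\{5,4\}$ and genus $4$.
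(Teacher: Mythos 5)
Your proposal is correct in substance, but it takes a genuinely different route from the paper: the paper offers no argument at all for this theorem beyond citing the Conder--Dobcs\'anyi computational census \cite{CD01} (it is introduced with ``which comes from Conder and Dobcsanyi's classifications''), whereas you sketch a self-contained group-theoretic proof. Your Euler-characteristic count forcing $|G|=120$, your explicit $(2,4,5)$-triple in $S_5$ (a transposition times a $4$-cycle giving a $5$-cycle, which generate $S_5$ since a $p$-cycle and a transposition generate $S_p$ for $p$ prime), and your elimination of $A_5\times C_2$ and $SL_2(5)$ are all sound. Two steps are left at sketch level but do close up as you suspect: for solvable $G$ of order $120$, Hall's theorem applied to a Hall $\{2,5\}$-subgroup shows the Sylow $5$-subgroup $P$ is indeed normal, and then the image of $Z$ in $G/P$ is trivial (its order divides both $5$ and $24$), so $G/P$ is generated by $\bar X,\bar Y$ with $\bar X\bar Y=1$, hence cyclic of order at most $4$, contradicting $|G/P|=24$; and the single-orbit claim in $S_5$ can be verified by a class-algebra-constant count (for each fixed $5$-cycle there are exactly $5$ factorizations as transposition times $4$-cycle, giving $120$ triples, all generating since the only transitive subgroups of $S_5$ containing a transposition is $S_5$ itself, on which $\mathrm{Aut}(S_5)=S_5$ acts freely in one orbit). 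One cosmetic caution: the paper's convention for type $\{5,4\}$ is quadrilateral faces and valency-$5$ vertices, so your relations $4V=2E=5F$ describe the dual labelling; this swaps $V$ and $F$ but leaves the Euler computation and the conclusion $|G|=120$ unchanged. What each approach buys: the paper's citation is short and rests on an exhaustively verified computation, while your argument makes the paper self-contained and explains \emph{why} $S_5$ is the only possible automorphism group rather than merely recording it.
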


  Underlying this map is a unique Riemann surface of genus 4 with automorphism group $S_5$.  This surface is called {\it Bring's Surface}.  (Type $\{5, 4\}$ means that all faces are quadrilaterrals and all vertices have valency 5.)

We let $\Gamma(2, 5, 4)$ denote the triangle group with periods 2, 5, 4.  This triangle group acts as a group of conformal  automorphisms of the upper-half complex plane $\mathbb{H}$ and it has a presentation  
 
$$\Gamma(2,5,4)=\langle X,Y,Z\,|\,X^{2}=Y^{5}=Z^{4}=XYZ=1\rangle.$$
  \vspace{0.2cm}

  There is an epimorphism $\theta:\Gamma(2,5,4)\mapsto S_5$ given by $X\mapsto x=(1, 5),   Y\mapsto y=(5,4,3,2, 1)$,  and $Z\mapsto z=(2, 3, 4, 5).$
  
  The kernel $K$ of this epimorphism is a surface group of index 120 in $\Gamma$ and so by the Riemann-Hurwitz formula $\mathbb{H}/K$ is a Riemann surface of genus 4 with $S_5$ as automorphism group and so by Theorem \ref{th1}, this surface is Bring's Surface.
  
  As $K$ is a normal subgroup of the triangle group $\Gamma(2, 5, 4)$, it follows from~\cite{JS78} that underlying  Bring's surface  there is a regular map of type $\{5, 4\}$ which we consider in more detail later.

  \vspace{0.2cm}

  All compact Riemann surfaces arise from compact algebraic curves.  Bring's surface is the underlying Riemann surface of  {\it Bring's Curve} which is the complete intersection of of the three hyper-surfaces
\begin{equation}
\sum_{i=1}^{5} x_i = 0\;\;\;\; \sum_{i=1}^{5} x_i^{2} = 0\;\;\;\;\sum_{i=1}^{5} x_i^{3} = 0.
\end{equation} 
  \vspace{0.2cm}

Another way of considering Bring's Surface is  in terms of the Hecke group $H_4$.

Consider a group with the two generators $T: z\mapsto z+\lambda$ and $S: z\mapsto \frac{-1}{z}$.

Hecke showed that this group is discrete, i.e., a Fuchsian group, if and only if

(i) $\lambda=\lambda_q=2\cos \frac{\pi}{q}$ or 

(ii) $\lambda\ge 2$.
  \vspace{0.2cm}

We are interested in case (i).\\

We then find that $$R(z)=TS(z)=\frac{\lambda_q z-1}{z}.$$

This transformation is represented by the matrix 
$$\begin{pmatrix}\lambda_q&-1\\1&0\\\end{pmatrix}.$$

The eigenvalues of this matrix are $e^{\pm \pi i/q}$ and so $R$ has period $q$.  It is known that as a group  $H_q\cong C_2*C_q$.
  \vspace{0.2cm}

In this paper we are interested in the case  $q=4$ and then $\lambda_4=\sqrt 2$.
  The group $H_4$ has two generators 
  
 $$T:z\mapsto z+\sqrt 2 \;\;\;\;and\;\;\;\;  S: z\mapsto \frac{-1}{z}.$$
 
  Writing the M\"obius transformations as matrices we find that the transformations of $H_4$ are of two types\\
 (i) $$\begin{pmatrix}a&b\sqrt 2\\c\sqrt 2& d\\ \end{pmatrix}  \;  a,b,c,d \in \mathbb{Z} \;,\; ad-2bc=1$$ \\ and\\(ii) $$\begin{pmatrix}a\sqrt 2&b\\c&d\sqrt 2\\ \end{pmatrix}\;  a,b,c,d \in \mathbb{Z} \;,\; 2ad-bc=1.$$
 
 The matrices of type (i) form a subgroup of index 2 in $H_4$ and are called {\it even} matrices.  We denote this subgroup by $H_{4}^{e}$.
 The matrices of type (ii) are called {\it odd} matrices and they form a coset.

 \section{The congruence subgroups of the Hecke groups}
Let $I$ be an ideal of  $\mathbb Z[\lambda_{q}]$.\\We define
 \begin{equation*}
  PSL(2,\mathbb Z[\lambda_{q}],I) = \left\{ \left(
  \begin{matrix}
    a & b \\ c & d 
  \end{matrix}
  \right) \in PSL(2,\mathbb Z[\lambda_{q}]) \; |\;  a-1,b,c,d-1\;\in I
  \right\}.
\end{equation*}

Now for any ideal $I$ of  $\mathbb Z[\lambda_{q}]$ we define the $\mathit{ principal\; congruence \;subgroup}$ of the Hecke group $H_{q}$ as, $$H_{q}(I)=PSL(2,\mathbb Z[\lambda_{q}],I) \cap H_{q},$$ that is, the subgroup of $H_{q}$ consisting of elements in $PSL(2,\mathbb Z[\lambda_{q}])$.  For our interest we take the special case when $I=(n)$ where $2\leq n\in \mathbb{Z}^+$, i.e. $(n)$ is a principal ideal of $\mathbb{Z}[\sqrt{m}]$ where $m=2,3$ for $q=4,6$. The even principal congruence subgroup is now    \begin{equation}
  H_q^e(n) = \left\{ \left(
   \begin{matrix}
    a    & b \sqrt{m} \\
   c \sqrt{m}     & d \\
  \end{matrix}
  \right) \in H_q^e \; |\; a\equiv d \equiv \pm 1 \;  \text{mod}\; n, \; b\equiv c \equiv  0 \;   \text{mod}\; n
  \right\}.
  \label{h:1}
\end{equation}  
As  $a\sqrt{m}$ and $d\sqrt{m}$  are not congruent to $ \pm 1$ mod $n$ we can not have odd elements in $H_q(n).$

\vspace{1cm}

For $q=$4 or 6 and for $n>2$, the index of $H_q(n)$ in $H_q$ was found by Parson~\cite[Theorem 2.3]{Par76}.
\vspace{1cm}

\begin{theorem}\label{p} 
 \begin{equation}
\label{dk1c}
\mu_q(n)=|H_q:(H_q)(n)|= \begin{cases}
                           n^3   \prod_{p|n} (1-\frac{1}{p^2})             &if \; (n,m)=1\\
               n^3 (1-\frac{1}{m})   \prod_{p|n,p\neq m} (1-\frac{1}{p^2})               &if \; (n,m)=m.
           \end{cases}
\end{equation}
 
\end{theorem}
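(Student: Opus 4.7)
The plan is to reduce the index computation to a count of matrices modulo $n$ via a reduction homomorphism, and then apply the Chinese Remainder Theorem to split into local computations at each prime dividing $n$.

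First I would observe that, since $H_q(n)$ consists entirely of even matrices (as noted just before the theorem) and $H_q^e$ has index $2$ in $H_q$, one has
$$[H_q : H_q(n)] = 2\,[H_q^e : H_q^e(n)].$$
It therefore suffices to compute $[H_q^e : H_q^e(n)]$. Using the integer parametrisation $(a,b,c,d)$ with $ad - mbc = 1$ of an even matrix (up to $\pm I$), define the reduction map $\rho_n$ from $H_q^e$ to
$$M(n) = \{(a,b,c,d) \in (\mathbb{Z}/n)^4 : ad - mbc \equiv 1 \pmod n\}/\{\pm I\}.$$
The matrix identity translated through the parametrisation shows $\rho_n$ is a group homomorphism, and by construction its kernel is precisely $H_q^e(n)$. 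Once surjectivity of $\rho_n$ is established, one has $[H_q^e : H_q^e(n)] = |M(n)|$. Since $n>2$ the class of $-I$ is non-trivial mod $n$, so $|M(n)|$ is one half the number of admissible quadruples.

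Next, by the Chinese Remainder Theorem the count of admissible quadruples is multiplicative over the prime-power divisors of $n$, so one is reduced to counting solutions of $ad - mbc \equiv 1 \pmod{p^k}$ for each $p^k$ exactly dividing $n$. Two cases arise. If $p \neq m$, then $m$ is a unit mod $p^k$ and the substitution $(a,b,c,d)\mapsto(a,mb,c,d)$ gives a bijection with $\mathrm{SL}(2,\mathbb Z/p^k)$, of order $p^{3k}(1-1/p^2)$. If $p = m$, then $mbc\equiv 0\pmod p$ forces $ad\equiv 1\pmod p$ and hence $a$ to be a unit; choosing $a\in(\mathbb Z/p^k)^{\times}$ and $b,c$ freely, the relation $d\equiv a^{-1}(1+mbc)\pmod{p^k}$ determines $d$ uniquely, giving $\varphi(p^k)\cdot p^{2k}=p^{3k}(1-1/m)$ solutions. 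Multiplying across primes, halving to pass to $|M(n)|$, and doubling back via the $H_q^e\subset H_q$ step recovers the stated formula in both the coprime case $(n,m)=1$ and the ramified case $(n,m)=m$.

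The main obstacle is establishing surjectivity of $\rho_n$, particularly at the ramified prime $p=m$ where $\sqrt m$ is a zero divisor mod $p$. The natural route is to exhibit explicit elements of $H_q^e$ — for instance $T^2$, which translates by $2\sqrt m$, together with $R^2$ and suitable conjugates — whose reductions manifestly generate $M(n)$, and then mimic the classical row-reduction argument showing $\mathrm{SL}(2,\mathbb Z)\twoheadrightarrow\mathrm{SL}(2,\mathbb Z/n)$. This is the technical heart of the argument and is where Parson's proof in \cite{Par76} would be followed in detail.
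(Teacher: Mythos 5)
Your proposal is sound, but note that the paper itself offers no proof of this statement at all: it is imported wholesale from Parson~\cite[Theorem 2.3]{Par76}, so any comparison is with Parson's argument rather than with anything in the text. Your reconstruction is essentially the standard counting proof and its arithmetic checks out: the reduction $[H_q:H_q(n)]=2[H_q^e:H_q^e(n)]$ is justified by the paper's observation that $H_q(n)$ contains no odd elements; the local counts are correct ($|\mathrm{SL}(2,\mathbb{Z}/p^k)|=p^{3k}(1-p^{-2})$ for $p\neq m$, and $\varphi(p^k)p^{2k}=p^{3k}(1-\tfrac1m)$ at $p=m$, where $a$ must be a unit and $d$ is then determined); and the factor-of-two bookkeeping with $\pm I$ cancels correctly, so the product over prime powers reproduces \eqref{dk1c} in both cases (sanity checks: $\mu_4(3)=24$ and $\mu_4(5)=120$, the values the paper uses). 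Two points deserve sharpening. First, your justification for halving ("the class of $-I$ is non-trivial") is necessary but not quite sufficient: you need the $\pm$ action on admissible quadruples to be \emph{free}, which holds for all $n>2$ (for even $n$ a fixed quadruple would have entries in $\{0,n/2\}$, forcing $ad-mbc\not\equiv 1$), but fails at $n=2$ — which is exactly why the theorem is stated for $n>2$. Second, on the deferred surjectivity step: the homomorphism claim becomes transparent via the isomorphism $(a,b,c,d)\mapsto\bigl(\begin{smallmatrix} a & b\\ mc & d\end{smallmatrix}\bigr)$ identifying $H_q^e$ with $\Gamma_0(m)$, but beware that when $m\mid n$ your target $M(n)$ remembers $c$ modulo $n$, i.e.\ the lower-left entry modulo $mn$, so surjectivity is not literally the classical statement $\mathrm{SL}(2,\mathbb{Z})\twoheadrightarrow\mathrm{SL}(2,\mathbb{Z}/n)$ for the subgroup $\Gamma_0(m)$; the row-reduction lifting argument does adapt, and this refined statement is what Parson actually proves. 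Deferring that step to \cite{Par76} is legitimate here — it is the same source the paper cites for the entire theorem — so your sketch, once these two points are tightened, is strictly more informative than the paper's treatment.
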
   
\vspace{1cm}

Now we are interested in the quotient spaces $\mathbb{H}/ H_{4}(n)$. \\  
The fundamental region $F_{\lambda_4}$ for $ H_{4}$ is the region of the upper-half plane bounded by the unit circle and the lines $Re(z)=\pm{\sqrt 2/2}.$

\begin{figure}[H]
  \caption{The fundamental region $F_{\lambda_4}$ for $H_{4}$ }
    \includegraphics[width=0.6\textwidth]{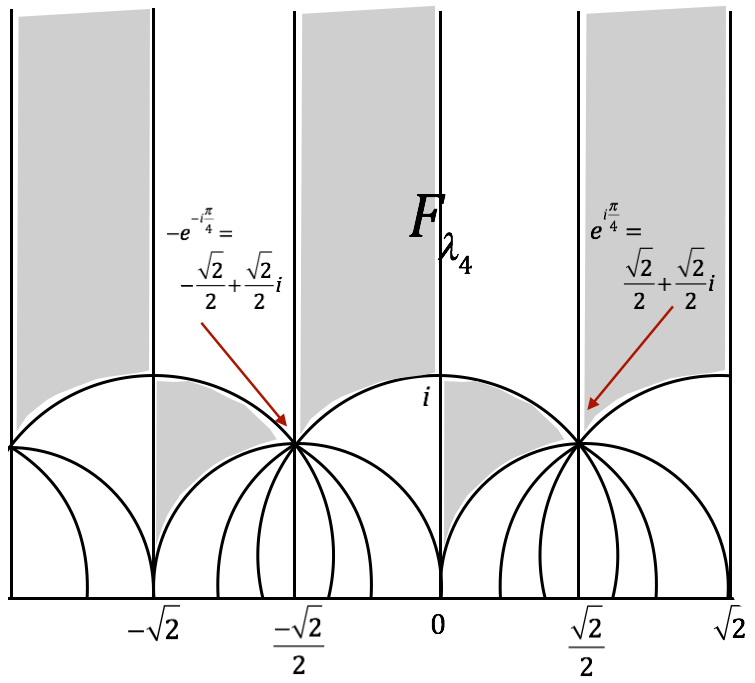}
    \centering
    \label{f9}
\end{figure}

This fundamental domain is not compact  and in fact the quotient space $\mathbb{H}/H_4$ has one puncture corresponding to the point at infinity in the fundamental domain. Thus  $\mathbb{H}/H_4(5)$ is a punctured surface.   However its compactification  $\overline {\mathbb{H}/H_4(5)}$ (found by adding the punctures) is a compact Riemann surface. Now by Theorem \ref{p},   $H_4(5)$ is a normal subgroup of index 120 in $H_4$ and so $\overline {\mathbb{H}/H_4(5)}$  is a compact Riemann surface with 120  automorphisms.
   \vspace{1cm}

   We now show its genus is equal to 4.  $H_4$ is the $\Gamma(2, \infty, 4)$ triangle group so that by \cite{JS78}  $\mathbb{H}/H_4(5)$ has the structure of a topological map. In fact, as $H_4(5)$ is a normal subgroup of $H_4$ (of index 120  by Theorem \ref{p}) it is a regular map with 120/2 =60 edges, 120/4=30 vertices and 120/5=24 faces. Thus if $g$ is the genus then $2-2g=30-60+24= -6,$ giving $g=4.$
      \vspace{1cm}

As $\mathbb{H}/H_4(5)$ has 120 automorphisms then by Theorem \ref{p} we deduce
\vspace{1cm}

 \begin{theorem}  
 $\overline {\mathbb{H}/H_4(5)}$ is Bring's surface.
   \end{theorem}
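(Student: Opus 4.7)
The strategy is to invoke the uniqueness half of Theorem~\ref{th1}. We aim to exhibit $\overline{\mathbb{H}/H_4(5)}$ as the underlying Riemann surface of a regular map of type $\{5,4\}$ of genus $4$ with $120$ automorphisms, and then conclude by that uniqueness that this surface is Bring's. All but one of the required properties have been established just above: the genus is $4$ by the Riemann--Hurwitz calculation, the map structure arises from~\cite{JS78} applied to the normal subgroup $H_4(5)\triangleleft H_4=\Gamma(2,\infty,4)$, and the quotient $H_4/H_4(5)$ supplies $120$ orientation-preserving automorphisms.

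The one remaining ingredient is to identify the type of the map as $\{5,4\}$. The vertex valency is $q=4$ because the elliptic generators of $H_4$ have orders $2$ and $4$. The face size equals the cusp width of $H_4(5)$ at $\infty$, since in the map coming from $\Gamma(2,\infty,4)$ the faces correspond to orbits of cusps and their boundary length is exactly that width. The stabilizer of $\infty$ in $H_4$ is generated by $T\colon z\mapsto z+\sqrt{2}$, whose $n$-th power corresponds to the matrix $\left(\begin{smallmatrix}1 & n\sqrt{2}\\ 0 & 1\end{smallmatrix}\right)$; by~\eqref{h:1} this matrix lies in $H_4(5)$ precisely when $n\equiv 0\pmod{5}$, so the cusp width, and hence the face size, is $5$. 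This is consistent with the counts $|V|=30$, $|E|=60$, $|F|=24$ noted above.

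With type, genus, and automorphism count in place, Theorem~\ref{th1} finishes the argument: up to isomorphism there is only one regular map of type $\{5,4\}$ of genus $4$ with $120$ automorphisms, and its underlying Riemann surface is Bring's. The only point I would expect to warrant a second look is whether the $120$ symmetries coming from $H_4/H_4(5)$ actually exhaust the full orientation-preserving automorphism group rather than forming a proper subgroup; however, an orientably regular map of type $\{5,4\}$ with $60$ edges has at most $2|E|=120$ orientation-preserving automorphisms, so equality is forced and no extra symmetries can appear. This is the sole place I anticipate a subtlety.
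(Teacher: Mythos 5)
Your proposal is correct and follows essentially the same route as the paper: equip $\overline{\mathbb{H}/H_4(5)}$ with a regular map structure via \cite{JS78}, count $120$ automorphisms from $H_4/H_4(5)$, compute genus $4$ by Euler's formula, and conclude by the uniqueness in Theorem~\ref{th1}. In fact you supply two details the paper leaves implicit: the paper asserts the counts $120/2$, $120/4$, $120/5$ without justifying the divisor $5$, which your cusp-width computation $T^n\in H_4(5)\iff 5\mid n$ (via \eqref{h:1}) makes precise, and the paper never addresses whether the $120$ automorphisms exhaust the map's automorphism group, which your dart-counting bound $2|E|=120$ settles. One caveat: you have dualized the paper's map. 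In the paper's construction (Section~4) the vertices of $\mathscr{M}_4(5)$ are the cusps --- that is the whole point of Hecke--Farey coordinates --- so vertices have valency $5$ (the cusp width) and the faces are quadrilaterals arising from the order-$4$ elliptic generator, matching the convention stated in Theorem~\ref{th1} (``faces are quadrilaterals and all vertices have valency 5''). Your map, with cusps as face centers, pentagonal faces and $4$-valent vertices, is the dual; since a map and its dual are simultaneously regular, share their automorphism group and their underlying Riemann surface, your conclusion stands after one added line passing to the dual map before invoking Theorem~\ref{th1}. (You are in good company: your counts $|V|=30$, $|F|=24$ agree with the paper's own sentence, which itself swaps vertices and faces --- Table~\ref{tab:tab6} lists $24$ vertices.)
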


 



 \section{Universal $q$-gonal maps}

  In \cite{Sin88, KS22} it was shown that the Farey map $\hat{\mathscr{M}}_{3}$ is the universal triangular map.  The Farey map is a map on the upper-half plane whose vertices are the extended rationals $\mathbb{Q}\cup \{\infty\}$
   and where two rationals $\frac{a}{c}$ and $\frac{b}{d}$ are joined by an edge if and only if $ad-bc=\pm 1.$ We sometimes refer to the  rational $\frac{a}{c}$ as a {\it Farey coordinate.} The automorphism group of $\hat{\mathscr{M}}_{3}$ is the classical modular group $\Gamma.$ Another way of constructing $\mathbb{F}$ is as the map whose vertex set is the extended rationals (the images of $\infty$ under $\Gamma$)  and whose edge set is the images of the imaginary axis under $\Gamma.$

      In  \cite{KS22} the authors constructed the universal $q$-gonal maps $\hat{\mathscr{M}}_{q}.$  The vertices of this map are the images of $\infty$ under $H_q$ and the edges are the images of the imaginary axis under $H_q.$ In \cite{KS22} these universal maps were drawn for $q=4,5,6,7$.  The {\it principal face} $F_q$ has as its vertices, the images of $\infty$ under the $q$ powers of $R.$ \\ In this paper we are interested in Bring's map and so we are particularly interested in the case $q=4.$ The universal 4-gonal map is drawn in Figure \ref{3.9}.

The vertices of ${\hat{\mathscr{M}}_{4}}$ are the images of $\infty$ under $H_4$  and these are all real numbers of the form $\frac{a\sqrt 2}{c}$ or   
 $\frac{b}{d\sqrt 2}$ where $a, b, c ,d\in \mathbb{Z}$  and $(a,c)=(b,d)=1.$  We call $\frac{a}{c\sqrt 2}$ and $\frac{b\sqrt 2}{d}$ as {\it Hecke-Farey coordinates.} An edge joins $\frac{a\sqrt 2}{c}$ and   $\frac{b\sqrt 2}{d}$ if and only if $ad-2bc=\pm 1.$ The {\it principal face} $F_4$ is the $4$-gon whose vertices are $\infty$, $\frac{0}{1}, \frac{1}{\sqrt 2}, \frac{\sqrt 2}{1}.$
 
\begin{figure}[H]  
  \caption{Principal face when $q=4$}
    \includegraphics[width=0.6\textwidth]{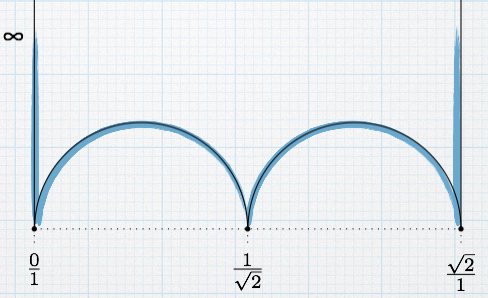}
    \centering
    \label{3.2.b}
\end{figure}

\begin{landscape}

\begin{figure}[!h]
    \includegraphics[scale=0.7]{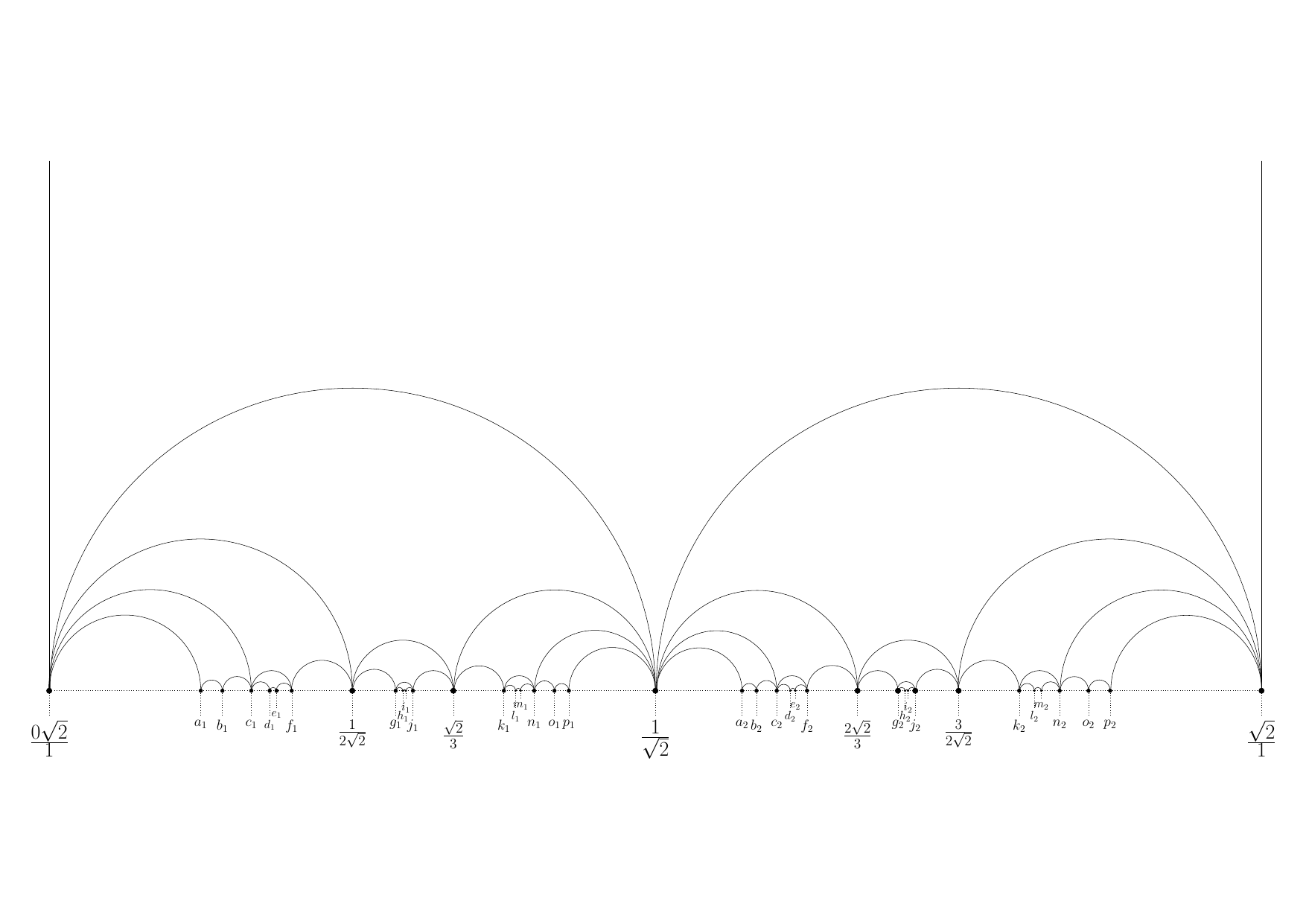}
        \vspace{-15mm}

    \centering
      \caption{The universal 4-gonal tessellation $\hat{\mathscr{M}}_{4}$ }
\label{3.9}
\end{figure}
\end{landscape}
\newpage

 \section{Farey fractions modulo $n$} Farey fractions were first introduced in \cite{SS19} for the case $q=3$.  A Farey fraction modulo $n$ is a pair $(a,c)$ where $(a,c,n)=1$ and where we identify $\frac{a}{c}$ with $\frac{-a}{-c}.$  We think of these fractions as being points on $\mathbb{H}/\Gamma(n)$. Two Farey fractions mod $n$,  $(a,c)$  and $ (b,d)$  are adjacent when $ad-bc\equiv \pm 1  \; \text{mod}\;n$. 
 
 \begin{example} {$n=5.$} The Farey fractions mod 5 are 
   
   $$\frac{1}{0}, \frac{2}{0}, \frac{0}{1}, \frac{1}{1}, \frac{2}{1}, \frac{3}{1}, \frac{4}{1}, \frac{0}{2},\frac{1}{2} , \frac{2}{2}, \frac{3}{2} , \frac{4}{2}.$$
   
   We join $\frac{a}{c}$ and $\frac{b}{d}$ by a edge if and only if $ad-bc\equiv \pm 1 mod\ 5$

   \end{example}

There are 12 vertices and by drawing the diagram we see that we get a regular icosahedron.   
\begin{figure}[H]
  \caption{Drawing of $\mathscr{M}_{3}(5)$ with Farey coordinates \cite{ISS} }
    \includegraphics[width=0.5\textwidth]{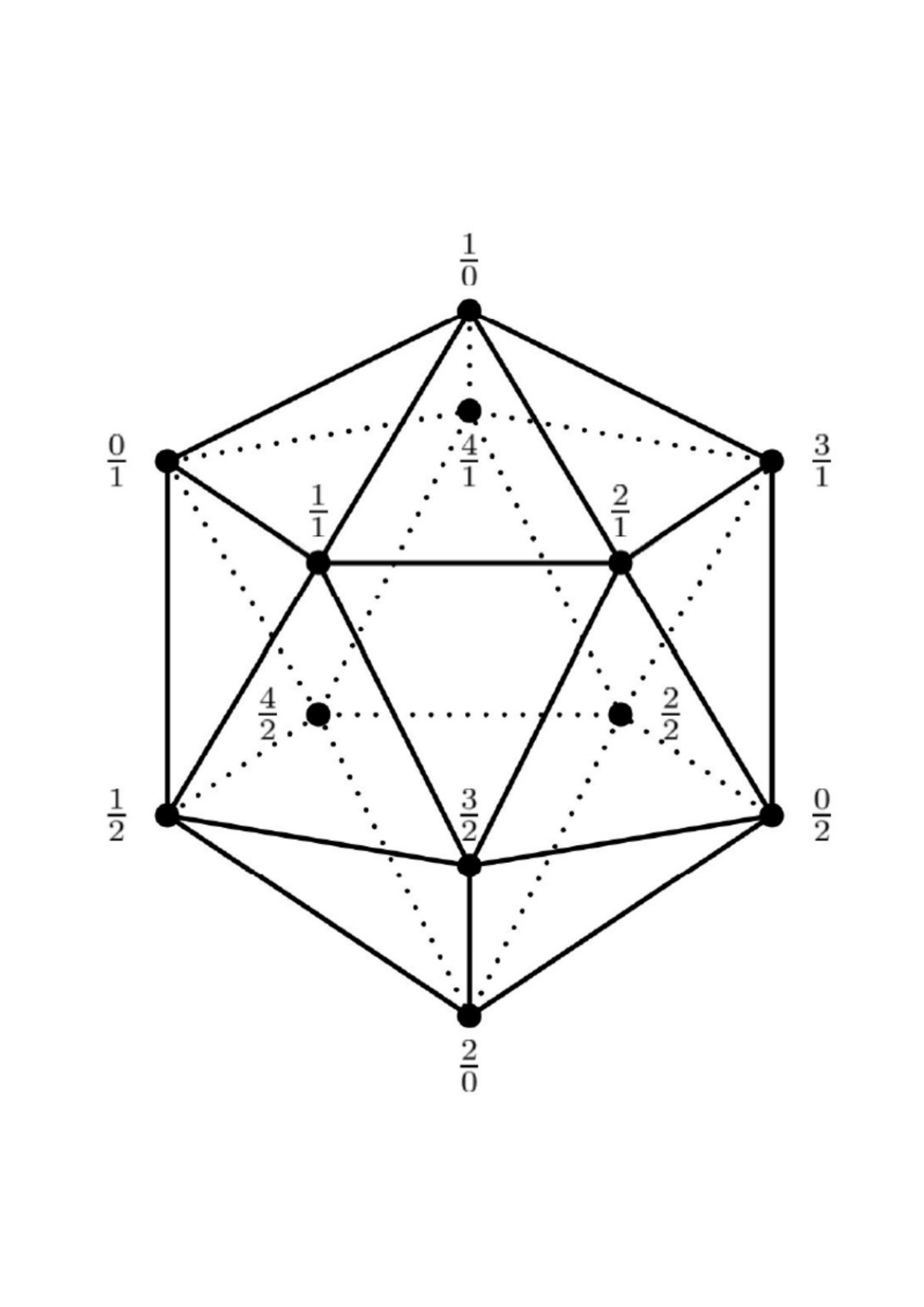}
    \centering
    \label{f9}
\end{figure} 

\section{Hecke-Farey fractions for the cube} 
  
 The main aim of this paper is to find Hecke-Farey fractions for Bring's curve, $\mathscr{M}_4(5).$   In general, for a positive integer $n$  the map $\mathcal{M}_4(n)$ has vertices of the form $\frac{a}{\sqrt 2 c}$ or $\frac{b\sqrt 2}{d}$. where $a,b,c,d\in \mathbb{Z}_n.$   These numbers are called {\it Hecke-Farey coordinates.}These two vertices are joined by an edge If and only if $ad-2bc\equiv \pm 1$ mod$\ n.$
 
 Before we consider $\mathscr{M}_4(5)$  we look at the simpler case $\mathscr{M}_4(3)$ which gives the cube.

 We are now considering the triangle group $\Gamma(2,3,4)$. This has a presentation 
 $\langle{X,Y,Z| X^2= Y^3= Z^4=XYZ=1}\rangle.$ It is well-known that $\Gamma(2,3 ,4)$ is isomorphic  to the symmetric group $S_4,$ the rotation group of the cube.  In fact, $X\mapsto (1,2), Y\mapsto(1, 3,4),  Z\mapsto (1,2,4,3) $ gives this isomorphism. 
 Now from (\ref{dk1c}), $|H_4:H_4(3)|=24$ so that $\mathscr{M}_4(3)$ has 24 darts, 12 edges,  8 vertices, and 6 faces. $\mathscr{M}_4 (3)$ is a regular map, it must be the cube. We now see that there are exactly 8 Hecke-Farey fractions modulo 3.
 
 These are $$ \frac{1}{0\sqrt 2}, \frac{0}{1\sqrt 2},  \frac{1}{1\sqrt 2},  \frac{1}{2\sqrt 2},      \frac{0\sqrt 2}{1},  \frac{2\sqrt 2}{1} , \frac{1\sqrt 2}{0},   \frac{1\sqrt 2}{1}.$$
 
 (Note for example that $\frac{\sqrt 2}{2}=\frac{-\sqrt 2}{-2}=\frac{2\sqrt 2}{1}$ so that this is the complete list of Hecke--Farey fractions modulo 3.)
 
 By drawing a diagram we notice that the corresponding Farey map is a cube.

 \begin{figure}[H]

     \centering

   \includegraphics[width=0.4\textwidth]{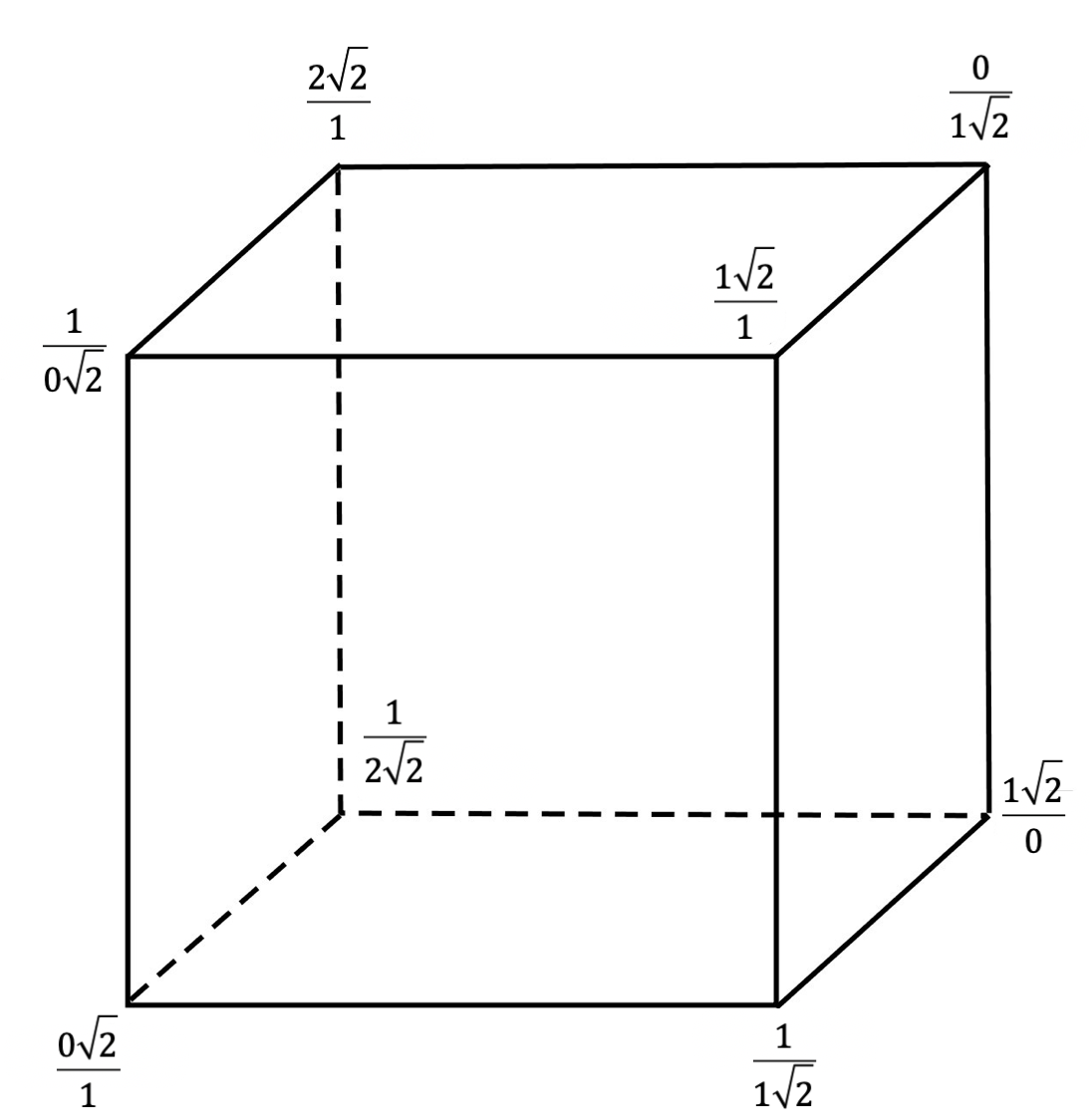}
    \caption{$\mathscr{M}_{4}(3)$; Cube embedded in a sphere}    
    \label{cube}
      \end{figure}

\section{The Bring curve $\mathscr{M}_4(5)$}
We want want to give each   vertex of  $\mathscr{M}_4(5)$ a Hecke-Farey coordinate.  These will have the form $\frac{a}{c\sqrt 2}$ or $\frac{b\sqrt 2}{d}$ where $a,b,c,d$ lie in the set $\{0,1,2,3,4\}$. Now $\frac{a}{c\sqrt 2} = \frac{x}{y\sqrt 2}$ if  and only if $a=\pm x$ and $c=\pm y$, etc. so we can easily work out all the 24 Hecke- Farey fractions representing the points of $\mathscr{M}_{4}(5)$. These are presented in Table \ref {tab:tab6}.

\begin{table}[H]
\caption {Table of Correspondence for $\mathscr{M}_{4}(5)$ } \label{tab:tab6} 

\centering
 \begin{tabular}{lc lc lc l} 
 \midrule 

$ A_1:\frac{1}{0\sqrt{2}}$ & $ B_1:\frac{2}{0\sqrt{2}}$ & $ C_1:\frac{0}{1\sqrt{2}}$ & $ D_1:\frac{1}{1\sqrt{2}}$ \\  
 \midrule 

 $ E_1:\frac{2}{1\sqrt{2}}$ & $ F_1:\frac{3}{1\sqrt{2}}$ & $ G_1:\frac{4}{1\sqrt{2}}$ & $ H_1:\frac{0}{2\sqrt{2}}$ \\ 
 \midrule 

$ I_1:\frac{1}{2\sqrt{2}}$ & $ J_1:\frac{3}{2\sqrt{2}}$ & $ K_1:\frac{2}{2\sqrt{2}}$ & $ L_1:\frac{4}{2\sqrt{2}}$  \\
 \midrule 

 $ A_2:\frac{0\sqrt{2}}{1}$ & $ B_2:\frac{0\sqrt{2}}{2}$ & $ C_2:\frac{1\sqrt{2}}{0}$ & $ D_2:\frac{1\sqrt{2}}{1}$ \\
 \midrule 

  $ E_2:\frac{1\sqrt{2}}{2}$ & $F_2:\frac{1\sqrt{2}}{3}$ & $ G_2:\frac{1\sqrt{2}}{4}$ & $ H_2:\frac{2\sqrt{2}}{0}$ \\
 \midrule 
 $ I_2:\frac{2\sqrt{2}}{1}$&$ J_2:\frac{2\sqrt{2}}{2}$&$ K_2:\frac{2\sqrt{2}}{3}$&$ L_2:\frac{2\sqrt{2}}{4}$ \\

 \hline
 \end{tabular}
\end{table}

\section{Poles} In a Farey map (the case $q=3$) a {\it pole} is a vertex with a Farey coordinate of the form $\frac{a}{0}$, where $a\not =0.$ For example, in the icosahedron $\mathscr{M}_{3}(5)$ the poles are $\frac{1}{0}$ and $\frac{2}{0}.$ which from Figure \ref{f9} are the north and south poles of the icosahedron.  Also, in \cite{ISS} Farey fractions were used to construct the fundamental region for the Klein quartic.  We found that $\frac{1}{0}$ was at the centre whilst $\frac{2}{0}$ and $\frac{3}{0}$ lie on the boundary. This is a typical situation. The pole $\frac{1}{0}$ always lies at the centre and the other poles lie on the boundary. In a rough sense these other poles lie at "infinity". The poles of $\mathscr{M}_{4}(5)$ are
$$A_1= \frac{1}{0\sqrt 2},  \, B_1=\frac{2}{0\sqrt 2},  \,  C_2=\frac{\sqrt 2}{0}, \, H_2=\frac{2\sqrt 2}{0}.$$

   \begin{figure}[H] 
    \centering

    \includegraphics[width=0.9\textwidth]{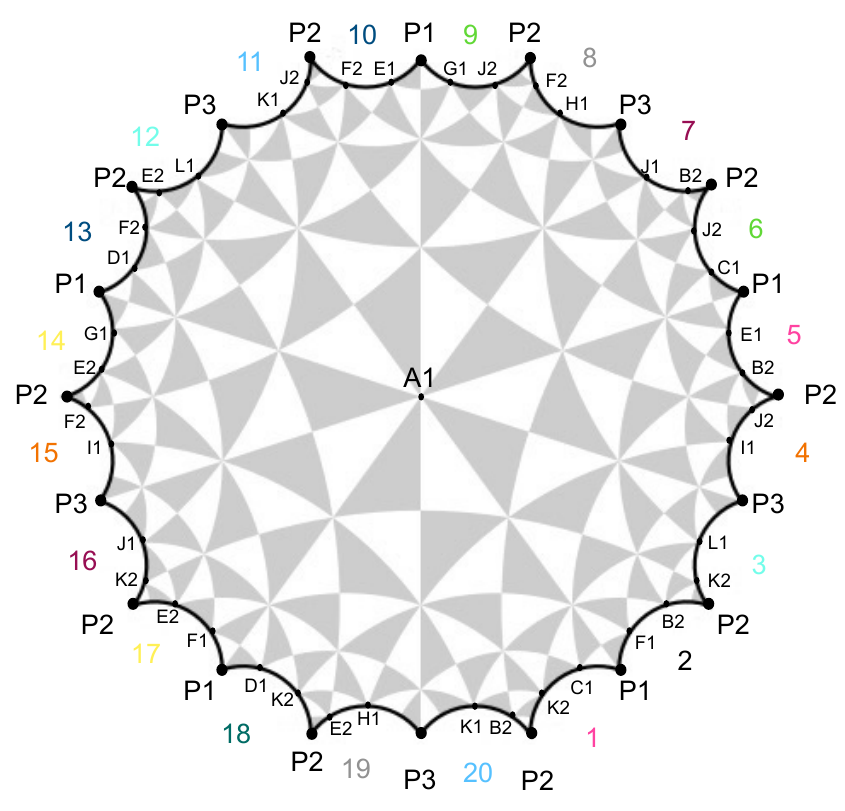}
    \caption{}     
    \label{art123}
\end{figure}



   \section{Hecke--Farey coordinates for Bring's surface}
There are two important papers where Bring's surface is drawn as a 20-sided polygon with side identifications. The first of these is by G.~Riera and R.~E.~Rodriguez \cite{RR92} from 1992.  The second is by H.~W.~Braden and T.~P.~Northover \cite{BN12}, who just used the Riera--Rodriguez diagram. 
   This diagram is slightly clearer so we use this. This is given in Figure \ref{art123}, but without the Hecke--Farey coordinates $G_1, J_2$ etc. In the diagram we also use their notation for the poles, So $P1 = H2,\;P2 = B2, \;P3 = C2$. Otherwise we use the original notation.
   
Riera and Rodriguez  did not give a proof that that their polygon represents Bring's surface. They write that the consider a Cayley diagram for $S_5$ and embed it into the surface but no details are given, nor did  they  justify the side pairings. Our aim is to give Hecke--Farey coordinates for the vertices of the 20-sided polygon.  This will tell us how to draw the polygon and also how to find the side-pairings. We start with a simple observation.

\begin{lemma} The map $z\mapsto z+\sqrt 2$ of the Hecke-Farey coordinates induces a rotation of $\mathscr{M}_4(5)$ through $2\pi/5$ about the centre $\frac{1}{0}.$
 
\end{lemma}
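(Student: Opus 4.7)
The plan is to verify three things: that $T:z\mapsto z+\sqrt 2$ descends to an automorphism of $\mathscr{M}_4(5)$ fixing $A_1=\frac{1}{0\sqrt 2}$, that this induced automorphism has order exactly $5$, and that it cyclically permutes the five neighbours of $A_1$ in the sense that matches $2\pi/5$ rather than a larger multiple. Since $T$ is one of the two standard generators of $H_4$, since $H_4(5)$ is normal in $H_4$ by Theorem~\ref{p}, and since $T$ fixes $\infty$ in the universal map $\hat{\mathscr M}_4$, the descent to $\mathscr M_4(5)$ and the fact that it fixes the vertex $A_1$ are immediate.

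For the order, I would write $T^k$ as the even matrix $\begin{pmatrix}1 & k\sqrt 2\\ 0 & 1\end{pmatrix}$, so in the notation of (\ref{h:1}) it has parameters $a=d=1$, $b=k$, $c=0$. The membership condition $b\equiv 0\pmod 5$ then forces $5\mid k$, while $T^5$ manifestly satisfies all four congruences, so $T$ has order exactly $5$ modulo $H_4(5)$. Because $A_1$ has valency $5$ in the regular map $\mathscr M_4(5)$, the orientation-preserving stabiliser of $A_1$ is cyclic of order $5$ acting by rotations through multiples of $2\pi/5$; the previous step shows $T$ is a generator of this cyclic group.

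To pin down that the induced rotation is through $2\pi/5$ in the natural direction, I would enumerate the neighbours of $\infty$ in $\hat{\mathscr M}_4$: the principal face $F_4$ and its $H_4$-translates of the form $T^k F_4$ tile a neighbourhood of $\infty$, and one reads off that the neighbours of $\infty$ are precisely the points $k\sqrt 2=\frac{k\sqrt 2}{1}$ for $k\in\mathbb Z$. Reducing modulo $5$ and applying the identifications in Table~\ref{tab:tab6} (in particular $\frac{3\sqrt 2}{1}=\frac{2\sqrt 2}{4}=L_2$ and $\frac{4\sqrt 2}{1}=\frac{\sqrt 2}{4}=G_2$) gives the cyclic sequence of neighbours
\[ A_2,\ C_2,\ H_2,\ L_2,\ G_2 \]
around $A_1$, and $T$ sends $\frac{k\sqrt 2}{1}$ to $\frac{(k+1)\sqrt 2}{1}$, shifting this sequence by one step. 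The only genuinely non-routine point is a sign convention: whether this single-step shift corresponds to $+2\pi/5$ or $-2\pi/5$ depends on the orientation chosen in Figure~\ref{3.9}, and once that convention is fixed the identification $2\pi/5$ is read off directly.
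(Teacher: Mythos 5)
Your proposal is correct and takes essentially the same route as the paper, whose own proof is only the two-line assertion that the transformation has order $5$ modulo $5$ and fixes $\frac{1}{0}$; your version fills in exactly the points the paper leaves implicit (descent via normality of $H_4(5)$, the matrix congruence from (\ref{h:1}) forcing order exactly $5$, and the single-step shift of the five neighbours that distinguishes a $2\pi/5$ rotation from a larger multiple). One labelling slip, not affecting the argument: the neighbours of $A_1$ reduced modulo $5$ are $A_2,\ D_2,\ I_2,\ L_2,\ G_2$, since $\frac{1\sqrt 2}{1}=D_2$ and $\frac{2\sqrt 2}{1}=I_2$ in Table~\ref{tab:tab6}, whereas $C_2=\frac{1\sqrt 2}{0}$ and $H_2=\frac{2\sqrt 2}{0}$ are poles and are not adjacent to $A_1$.
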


\begin{proof} As we are working modulo 5 in in $\mathscr{M}_4(5)$ the transformation has order 5 and fixes the centre $\frac{1}{0}.$ (Here we are identifying  the Hecke-Farey coordinate with its corresponding vertex.) 
\end{proof}

For example, consider the pole $H_2=\frac{2\sqrt 2}{0}$, which we regard as the top vertex of our 20-sided polygon (See Figure \ref{art123}).  As $H_2+\sqrt 2=H_2$ we find that there are 5 vertices of our polygon, labelled $H_2.$  
The same applies to the pole $C_2$. There are 5 poles with label $C_2.$ However,  as we now see, there are 10 poles with label $B_1$.

\begin{definition} A  {\it Farey circuit}  is a collection of points $v_0, v_1,\cdots v_n$ each with a Hecke-Farey coordinate such that $v_i$ is adjacent to $v_{i+1}$ and $v_0=v_n.$

\end{definition}

A Farey circuit on $\mathscr{M}_4(5)$ is given by $H_2,  E_1, F_2, B_1, J_2, K_1, C_2, L_1, E_2, B_1,\\ F_2, D_1, H_2$ (12 vertices). For example $H_2=\frac{2\sqrt 2}{0}$ is adjacent to $E_1=\frac{2}{\sqrt 2}$ and $E_1$ is adjacent to $F_2=\frac{\sqrt 2}{3}$, etc.  We continue this Farey sequence  just by adding multiples of  $\sqrt 2.$ For example, if we add  $\sqrt 2$ to our circuit  we get $H_2+\sqrt 2=H_2$,  $E_1+\sqrt 2=G_1$, $F_2+\sqrt 2=E_2$ etc., and we  get another 12 vertices. We get 12 more vertices each by adding $2\sqrt 2, \;3\sqrt 2$ and $4\sqrt 2$. In  this way we find Hecke-Farey  coordinates for all of the  60 boundary vertices as illustrated in Figure \ref{art123}.  Now in the first 12 vertices we found that there was one pole labelled $H_2=P_1$, one pole labelled $C_2=P_3$  and two poles labelled $B_2=P_2$.  Thus in total there are 5 poles of the form $P_1$, 5 poles labelled $ P_3$ and 10 poles labelled $P_2$. Thus in total there are 20 poles on the boundary and we regard these as the vertices of a 20-sided hyperbolic polygon. We use the diagram in \cite{BN12}, where the edges are labelled $1,2,\cdots , 20.$

In the diagram of Bring's surface with its  Hecke-Farey coordinates, each at the 20 edges has one vertex of valency 5 in its interior.
 For example, edge 1 has $K2$ as the vertex of valency 5 in its interior. In Table \ref{table:ds} we just list the edges together with these vertices.
 
 \begin{table}[h!]   
\begin{center}
\begin{tabular}{ | c | c | } 
\hline
 $1 \leftrightarrow  K_2$ &  $2 \leftrightarrow B_2$   \\ \hline
 $3 \leftrightarrow  L_1$ &  $4 \leftrightarrow I_1$  \\ \hline
 $5 \leftrightarrow  B_2$ &  $6 \leftrightarrow J_2$ \\ \hline
$7 \leftrightarrow  J_1$ &  $8 \leftrightarrow H_1$\\ \hline
$9 \leftrightarrow  J_2$ &  $10 \leftrightarrow F_2$ \\ \hline
$11 \leftrightarrow  K_1$ &  $12 \leftrightarrow L_1$ \\ \hline
$13 \leftrightarrow  F_2$ &  $14 \leftrightarrow E_2$ \\ \hline
$15 \leftrightarrow  I_1$ &  $16 \leftrightarrow J_1$ \\ \hline
$17 \leftrightarrow  E_2$ &  $18 \leftrightarrow K_2$ \\ \hline
$19 \leftrightarrow  H_1$ &  $20 \leftrightarrow K_1$ \\ \hline

\end{tabular}
    \end{center}
    \caption{}
    \label{table:ds}

\end{table}   

 We now just pair the sides of the polygon which have the same Hecke-Farey coordinate.  For example both side 1 and side 18 have the vertex $K_2$ so we pair them.
 
 We give all the side pairings in Table \ref{table:c}.
 \begin{table}[h!]   
\begin{center}
\begin{tabular}{ | c | c | } 
\hline
 $2 \leftrightarrow  5$ &  $6 \leftrightarrow 9$   \\ \hline
 $10 \leftrightarrow  13$ &  $14 \leftrightarrow 17$  \\ \hline
 $18 \leftrightarrow  1$ &  $3 \leftrightarrow 12$ \\ \hline
$7 \leftrightarrow  16$ &  $11 \leftrightarrow 20$\\ \hline
$15 \leftrightarrow  4$ &  $19 \leftrightarrow 8$ \\ \hline

\end{tabular}
    \end{center}
    \caption{}
    \label{table:c}

\end{table}   

We notice that if $k\equiv 2\,
\; \text{mod}\; 4$ then we pair $k$ with $k+3$, while if $k\equiv  3\; \text{mod}\; 4$ then we pair $k$ with $k+9$.  This is similar, but not exactly the same as in \cite{RR92} or \cite{BN12} where they give their answers without  proof.

\newpage

\section{Equivalence  classes  of vertices}.
  
  With these side identifications we can easily find the equivalence classes of vertices. \\Label the vertices $a_1, a_2,\cdots a_{20}$ with edge 1 having vertices  $a_1, a_2$ (going anti-clockwise), side 2 having vertices $a_2, a_3$ etc.

As 2 is paired to 5, we get $a_2=a_6$,\;$a_3=a_5$.

As 6 ia paired to 9, we get $a_6=a_{10}$,  $a_7=a_9.$

As 7 is paired with 16, we get $a_7=a_{17},\; a_8=a_{16}$.

As 10 is paired to 13, we have $a_{11}=a_{13}$, $a_{10}=a_{14}.$

As 11 is paired to 20 is paired with 13, we get $a_{11}= a_{13}$, $a_{10}=a_{14}.$

As 14 is pared with 17 we gat $a_{15}=a_{17}$, $a_{14}=a_{18}.$

As 4 is paired to 15 $a_4=a_{16}$, $a_3=a_{15}.$

As 18 is paired to 1, we get $a_1=a_{19}$, $a_{18}=a_2.$

As 8 is paired with 19 , we get $a_8=a_{19}$, $a_8=a_{20}.$

Thus we find that $a_k$ where $k$ is odd represent one vertex in $\mathscr{M}_4(5)$,   $a_2,\; a_6,\;a_{10},\;a_{14},\;a_{18}$ represent another vertex and $a_4,\;a_8,\;a_{12}$ and $a_{16}$ represent a third vertex. Thus $\mathscr{M}_3(5)$, Bring's surface has 3 vertices, 10 edges and 1 face.  We then get $2-2g=3-10+1= -6$ dgiving $g=4$ which it should be.


.

\end{document}